\newtheorem{Lemma}{Lemma}
\newtheorem{Definition}{Definition}
\newtheorem{Theorem}{Theorem}
\newtheorem{Corollary}{Corollary}
\newcommand\ScaleExists[1]{\vcenter{\hbox{\scalefont{#1}$\exists$}}}
\DeclareMathOperator*\bigexists{%
  \vphantom\sum
  \mathchoice{\ScaleExists{2}}{\ScaleExists{1.4}}{\ScaleExists{1}}{\ScaleExists{0.75}}}
\begin{document}
\title{On Generalizing a Temporal Formalism for Game Theory to the Asymptotic Combinatorics of S$5$ Modal Frames}
\author{Samuel Reid}
\date{\today}
\maketitle

\begin{abstract}
A temporal-theoretic formalism for understanding game theory is described where a strict ordering relation on a set of time points $T$ defines a game on $T$. Using this formalism, a proof of Zermelo's Theorem, which states that every finite 2-player zero-sum game is determined, is given and an exhaustive analysis of the game of Nim is presented. Furthermore, a combinatorial analysis of games on a set of arbitrary time points is given; in particular, it is proved that the number of distinct games on a set $T$ with cardinality $n$ is the number of partial orders on a set of $n$ elements. By generalizing this theorem from temporal modal frames to S$5$ modal frames, it is proved that the number of isomorphism classes of S$5$ modal frames $\mathcal{F} = \langle W, R \rangle$ with $|W|=n$ is equal to the partition function $p(n)$. As a corollary of the fact that the partition function is asymptotic to the Hardy-Ramanujan number
$$\frac{1}{4\sqrt{3}n}e^{\pi \sqrt{2n/3}}$$
the number of isomorphism classes of S$5$ modal frames $\mathcal{F} = \langle W, R \rangle$ with $|W|=n$ is asymptotically the Hardy-Ramanujan number. Lastly, we use these results to prove that an arbitrary modal frame is an S$5$ modal frame with probability zero.
\end{abstract}

\section{Temporal Syntax and Semantics}
Temporal Logic provides a comprehensive framework for understanding time through the use of model theory and modalities. In particular, we can understand linear, branching, discrete, dense, or Dedekind Complete time flows by simple conditions on a linear ordering in a model. For applications to game theory, we will be interested in branching time flows.

\begin{Definition}
Let $\mathcal{F} = \langle T, < \rangle$ be a frame where $T$ is a set of time points determining a time flow and $<$ is a strict ordering relation, that is, $<$ is antireflexive, antisymmetric, and transitive. 
\end{Definition}

We now define the ordering relation which is used in branching temporal logic to define a ``tree of time''.

\begin{Definition}
A branching frame $\mathcal{F} = \langle T, < \rangle$ is a frame where $\{s \; | \; s < t \}$ is strictly partially ordered by $<$ for all $t \in T$.
\end{Definition}

For discussing one particular branch in the tree of time we consider histories in $T$ in order to be able to determine if modal formulas are satisfied when referring to only a single branch of the tree of time.

\begin{Definition}
Given a branching frame $\mathcal{F} = \langle T, < \rangle$, a history $h \in H$ in $T$ is a maximal linearly ordered subset of $T$, where $H$ is the set of all histories in $T$.
\end{Definition}

A branching model of time results by assigning a valuation to the time points of a branching frame.

\begin{Definition}
A branching model of time $\mathcal{M} = \langle T, <, V \rangle$ is a branching frame with a valuation $V: T \times \text{Var} \rightarrow \{0,1\}$.
\end{Definition}




We can now define the traditional alethic modalities of necessity and possibility as follows.

\begin{Definition}
Let $\varphi$ be an arbitrary formula and define the two alethic modalities, necessity ($\Box$) and possibility ($\Diamond$), as follows:
\begin{itemize}
\item $\mathcal{M}_{h,t} \vDash \Box \varphi$ if and only if $\forall t' > t, \mathcal{M}_{h,t'} \vDash \varphi$ when $\neg \exists t''$ so that $t < t'' < t'$ with $t,t',t'' \in T$.
\item $\mathcal{M}_{h,t} \vDash \Diamond \varphi$ if and only if $\exists t' > t, \mathcal{M}_{h,t'} \vDash \varphi$ such that $\neg \exists t''$ so that $t < t'' < t$ with $t,t',t'' \in T$.
\end{itemize}
\end{Definition}

We then have that necessity gives us a condition on all possible time flows emanating into the future from one time point and that possibility gives us a condition on at least one time flow emanating into the future from one time point, as is illustrated in the following figure.

\begin{figure}[h!]
\begin{center}
\includegraphics[scale=0.39]{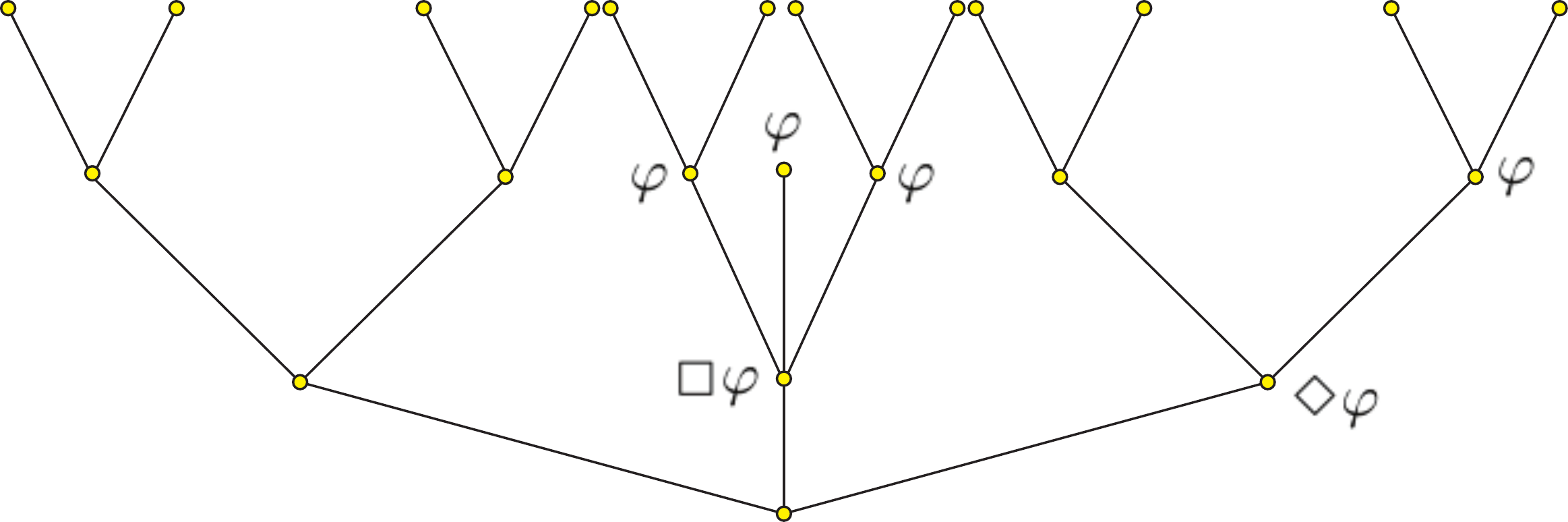}
\caption{An illustration of a branching model $\mathcal{M}$ with certain modal formulas satisfied.}
\end{center}
\end{figure}

We now consider articulated histories, which split histories into the past time points and possible future time points.

\begin{Definition}
An articulated history of a time point $t \in T$ is a pair $(h_{p}(t),h_{f}(t))$, where $h_{p}(t) = \{ t' \; | \; t' < t\}$ is the set of all past time points of $t$ in all histories containing $t$ and $h_{f}(t) = \{ t' \; | t' > t\}$ is a set of future time points of $t$ which determines a unique history $h = h_{p}(t) \cup \{t\} \cup h_{f}(t)$.
\end{Definition}

Using the idea of articulated histories, the set of time points $T$ can be partitioned into time points which are considered to coincide at the same instant.

\begin{Definition}
A set of time points $\{t_{1},...,t_{n}\}$ belongs to an instant $I \subseteq T$ if $t_{i} \slashed{<} t_{j}$ and $|h_{p}(t_{i})| = |h_{p}(t_{j})|, \forall i,j$.
\end{Definition}

\section{A Temporal-Theoretic Formalism for Game Theory}
The temporal syntax and semantics constructed in the previous section can now be applied to game theory by considering time flows, or histories, as ways in which a game is played, time points as turns, and a partition of the instants of the time points into turns associated with a player in the game. We now call the branching model of time $\mathcal{M}$ a game $G$ and let $\psi$ be a formula which denotes that the game is a tie.

\begin{Definition}
In an $n$-player game $G$, the $i^{\text{th}}$ instant $I_{i}$ is player $k$'s turn if $k \equiv i \mod n$. Furthermore, player $k$ wins the game at a turn $t$ in a history $h$ if $|h_{f}(t)| = 0$ and $t \in I_{i}$ with $k \equiv i - 1 \mod n$ and $G_{h,t} \vDash \neg \psi$.
\end{Definition}

Simply put, player $k$ wins if the game ends in a non-tie after player $k$'s turn, as $|h_{f}(t)|=0$ means that there are no future turns to be played. The combinatorial analysis of $G$ can be simplified by considering formulas which will be satisfied at a turn $t$ if player $k$ wins or loses at a turn $t$. That is, define the formula $\omega_{k}$ to mean that player $k$ wins and define the formula $\chi_{k}$ to mean that player $k$ loses as follows
$$\omega_{k} := \neg \psi \wedge t \in I_{i} \wedge |h_{f}(t)| = 0 \wedge k \equiv i-1 \mod n$$
$$\chi_{k} := \neg \psi \wedge (t \notin I_{i} \vee |h_{f}(t)| \neq 0 \vee k \slashed{\equiv} i-1 \mod n)$$

It is clear that $\psi \Rightarrow (\neg \omega_{k} \wedge \neg \chi_{k})$, which is that a tie implies player $k$ did not win and that player $k$ did not lose. Observe that a non-tie does not imply anything about player $k$ winning or losing as we have that there exists a 2-player game $G$ such that $\exists t \in T$ for which
$$G_{h,t} \vDash \Diamond(\neg \psi \wedge \neg(\omega_{k} \vee \chi_{k}))$$

Namely, this holds for $k=1$ in the following 2-player game.

\begin{figure}[h!]
\begin{center}
\includegraphics[scale=0.39]{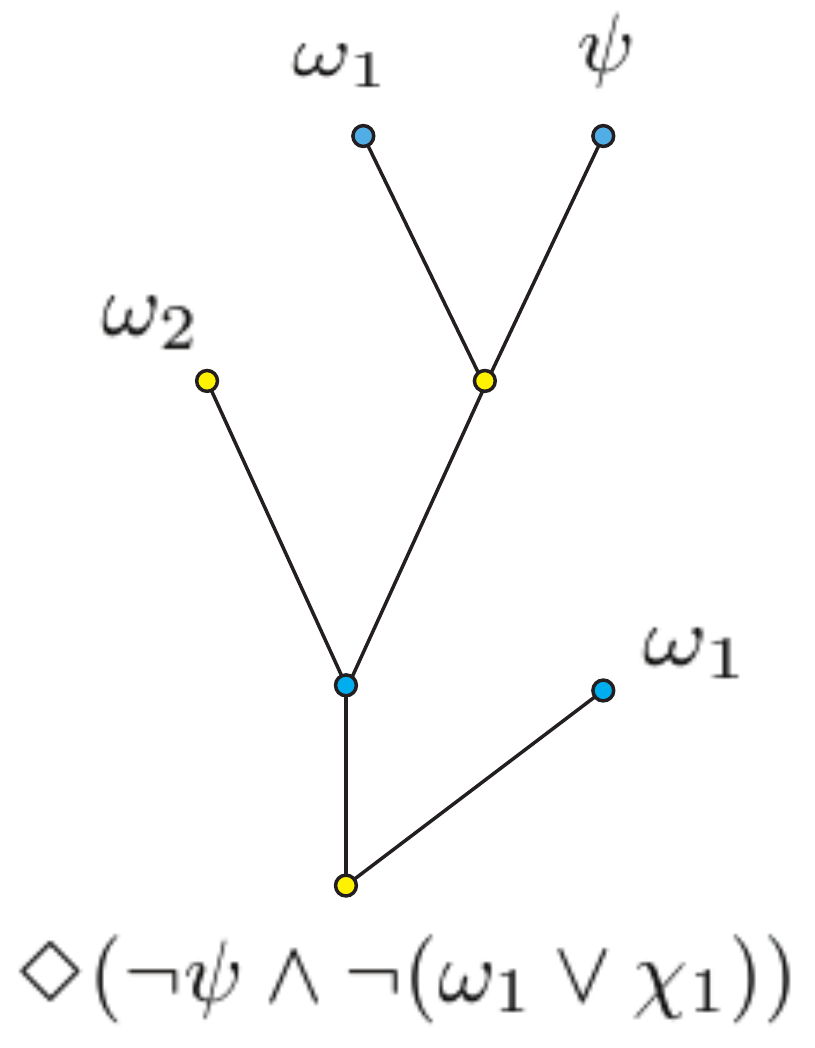}
\caption{A 2-player game where $G_{h,t_{0}} \vDash \Diamond(\neg \psi \wedge \neg(\omega_{1} \vee \chi_{1}))$, where the yellow instants are player 1's turns and the blue instants are player 2's turns.}
\end{center}
\end{figure}

The notions of winning strategy and drawing strategy can now be defined by noticing the cardinalities of the future time points of an articulated history. Intuitively, a winning strategy for player $k$ is a response to the moves of the previous player so that player $k$ can win no matter what moves the other players make; similarly, a drawing strategy is a strategy which does not guarantee a win for a particular player, but guarantees that that player does not lose.

\begin{Definition}
In an $n$-player game $G$, a winning strategy exists for player $k$ if for every sequence of moves $(t_{k-1},t_{2(k-1)},...,t_{l(k-1)})$ by player $k-1$, there exists a sequence of moves $(t_{k},t_{2k},...,t_{lk})$ by player $k$ such that
$$G_{h,t_{i}} \vDash \Diamond^{|h_{f}(t_{i})|} \omega_{k}$$
$\forall i=jk, 1 \leq j \leq l$, where $t_{i} \in I_{i}$ and $k \equiv i \mod n$.
\end{Definition}

The temporal formalism developed for discussing a branching model of time can be applied to give a proof of Zermelo's Theorem, a classic result in game theory originally proved in a German paper from 1913 by Zermelo. An English presentation of the proof in modern notation is given in \cite{Schwalbe2001123}. A finite game is a game that must necessarily terminate in a finite number of moves, that is, $|h_{f}(t_{0})| < k \in \mathbb{N}$ for all future histories of $t_{0}$, where $t_{0}$ is the initial turn defined as satisfying the property that $t_{0} < t$, $\forall t \in T \backslash \{t_{0}\}$. Zero-sum games are games where each player can either win or lose, that is, $G_{h,t} \vDash \neg \psi$, $\forall t \in T$.

\begin{Theorem}
(Zermelo) Every finite zero-sum 2-player game is determined (one of the two players has a winning strategy). 
\end{Theorem}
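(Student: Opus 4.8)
\section{Proof proposal for Zermelo's Theorem}

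The plan is to follow the classical backward-induction argument, recast inside the present temporal formalism, with finiteness supplying a well-founded measure on which to induct. For a time point $t \in T$, let $\eta(t) = \max\{|h_{f}(t)| : h \text{ a history with } t \in h\}$ denote the length of the longest future segment emanating from $t$; since $|h_{f}(t_{0})| < k \in \mathbb{N}$ for every future history of $t_{0}$, this maximum exists for all $t$. I would prove, by induction on $\eta(t)$, that every $t$ can be assigned a label $L(t) \in \{1,2\}$ such that player $L(t)$ has a winning strategy in the subgame rooted at $t$, and then apply this to $t = t_{0}$. Here the subgame rooted at $t$ is the branching model carried by $\{t\} \cup \{t' : t' > t\}$; passing to it shifts every instant index down by $|h_{p}(t)|$, so the turn parities (and hence which player owns which node) remain consistent with those of $G$.

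For the base case $\eta(t) = 0$ the point $t$ is terminal in its history, so $|h_{f}(t)| = 0$; since the game is zero-sum, $G_{h,t} \vDash \neg \psi$, and by the definitions of $\omega_{k}$ and $\chi_{k}$ exactly one of $\omega_{1}, \omega_{2}$ holds at $t$, say $\omega_{k}$ with $k \equiv i - 1 \bmod 2$ where $t \in I_{i}$. I set $L(t) = k$; the winning strategy for $k$ is the empty strategy. For the inductive step, suppose $\eta(t) > 0$ and that $L$ has been defined on every immediate successor of $t$ (each of which has $\eta$-value at most $\eta(t) - 1$, with at least one attaining it). Let $t \in I_{i}$ be player $p$'s turn, $p \equiv i \bmod 2$, so the successors of $t$ are player $q$'s turns with $q \equiv p + 1 \bmod 2$. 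If some successor $t'$ has $L(t') = p$, set $L(t) = p$: player $p$ moves $t \to t'$ and then plays, against each subsequent move of $q$, the winning strategy furnished by the inductive hypothesis at $t'$. If no successor is labelled $p$, then every successor is labelled $q$, and I set $L(t) = q$: whichever $t'$ player $p$ selects, player $q$ answers with the winning strategy at $t'$.

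The heart of the matter, and the step I expect to be the main obstacle, is verifying that the strategy produced by this recursion actually satisfies the Definition of a winning strategy, i.e.\ that along every play consistent with it one has $G_{h,t_{i}} \vDash \Diamond^{|h_{f}(t_{i})|} \omega_{k}$ at each of player $k$'s turns $t_{i}$. This requires unwinding the nested-$\Diamond$ semantics: each $\Diamond$ advances one instant, and $|h_{f}(t_{i})|$ such steps arrive at a terminal point of the chosen history, where $\omega_{k}$ must hold. I would establish this by a secondary downward induction along the history using the label-propagation rule: from a node labelled $k$ on which it is $k$'s turn the strategy moves to a successor still labelled $k$, and from a node labelled $k$ on which it is the opponent's turn \emph{every} successor is labelled $k$ by construction, so the label $k$ is preserved down to a terminal point, where $L = k$ means $\omega_{k}$ is satisfied. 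The remaining bookkeeping is the turn alternation: for $n = 2$ we have $k - 1 \equiv k + 1 \bmod 2$, so the two players strictly alternate along any history, which is precisely what lets the recursively-built strategy be reorganised into the form ``for every sequence of moves by player $k-1$ there exists a sequence of moves by player $k$'' demanded by the Definition; finiteness guarantees all these sequences are finite and that both inductions terminate, whence $L(t_{0})$ names a player with a winning strategy and $G$ is determined.
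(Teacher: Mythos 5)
Your proposal is correct and takes essentially the same route as the paper: induction on the depth of the game, applying the inductive hypothesis to the subgames rooted at the successors of the node under consideration. If anything, your version is the more careful one --- the explicit case split between \emph{some} successor being labelled for the player to move and \emph{all} successors being labelled for the opponent is precisely the quantifier alternation that the paper's own proof elides when it treats a single arbitrary $t' \in I_{2}$ and concludes directly from the winner of that one subgame.
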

\begin{proof}
We prove Zermelo's Theorem by induction on the length of the game $L(G)$, defined to be the maximum cardinality of the future time points in an articulated history, that is $L(G) = \max\{|h_{f}(t_{0})| \; | \; h \in H\}$. If $L(G) = 1$ then we have that $G_{h,t_{0}} \vDash \Box \omega_{1}$ because $G$ is zero-sum, and so $G$ is determined. So, assume that every finite zero-sum 2-player game $G$ is determined for $L(G) = k$, where $k \in \mathbb{N}$. Then let $G$ be an arbitrary game with $L(G) = k+1$ and consider the set of all time points on player 2's first turn, denoted by $I_{2}$. Let $t' \in I_{2}$ be arbitrary and notice that $t'$ is the root of the subgame $G' \subset G$ with $L(G') = k$. By the induction hypothesis we then have that $G'$ is determined and so we have that either $G'_{h,t'} \vDash \Diamond^{k} \omega_{1}$ or $G'_{h,t'} \vDash \Diamond^{k}\omega_{2}$. If $G'_{h,t'} \vDash \Diamond^{k} \omega_{1}$ then $G_{h,t_{0}} \vDash \Diamond^{k+1} \omega_{1}$ and if $G'_{h,t'} \vDash \Diamond^{k}\omega_{2}$ then $G_{h,{t_{0}}} \vDash \Diamond^{k+1} \omega_{2}$ and so we have that $G$ is determined. Therefore, by induction we have that every finite zero-sum 2-player is determined.
\end{proof}

\section{Example: The Game of Nim}
In order to illustrate the temporal formalism developed in the preceding section, we give an exhaustive temporal analysis of the game of Nim \cite{modelsandgames}. In the game of Nim, two players called Alice and Bob choose either one or two tokens on their turn from the same pile of six tokens. The winner of the game is the player who removes the last token. Nim is a zero-sum game because a win for Alice is a loss for Bob and a win for Bob is a loss for Alice, finite because there are at most 6 turns, and 2-player by definition. Therefore, Nim is determined by Zermelo's Theorem and so we know that there is a winning strategy for either Alice or Bob. If Alice goes first then Bob has a winning strategy which is as follows: If Alice takes two tokens then Bob takes one token and if Alice takes one token then Bob takes two tokens. Let $Alice(1)$, resp. $Bob(2)$, denote Alice (Bob) taking one token from the pile and let $Alice(2)$, resp. $Bob(2)$, denote Alice (Bob) taking two tokens from the pile. Then we have that the winning strategy for Bob can be formalized as follows. Let $(Alice(x_{1}),Alice(x_{2}),Alice(x_{3}))$ be an arbitrary sequence of moves by Alice, where $x_{1},x_{2},x_{3} \in \{0,1,2\}$. If $x_{i} = 0$ then it means that the game has already ended. Define a function as follows,
$$\chi(x_{i}) =
\begin{cases}
1 \;\;\; \text{if } x_{i} = 2 \\
2 \;\;\; \text{if } x_{i} = 1 \\
0 \;\;\; \text{if } x_{i} = 0
\end{cases}$$
Then the strategy $(Bob(\chi(x_{1})),Bob(\chi(x_{2})),Bob(\chi(x_{3})))$ ensures that $\text{Nim}_{h,t_{i}} \vDash \Diamond^{|h_{f}(t_{i})|} \omega_{\text{Bob}}$, where $t_{i} \in I_{i}$. This illustrates how the satisfaction of a series of modal formulas, namely $\Diamond^{|h_{f}(t_{i})|} \omega_{\text{Bob}}$ at each $t_{i} \in I_{i}$ can capture the notion of a winning strategy in game theory.

\begin{figure}[h!]
\begin{center}
\includegraphics[scale=0.43]{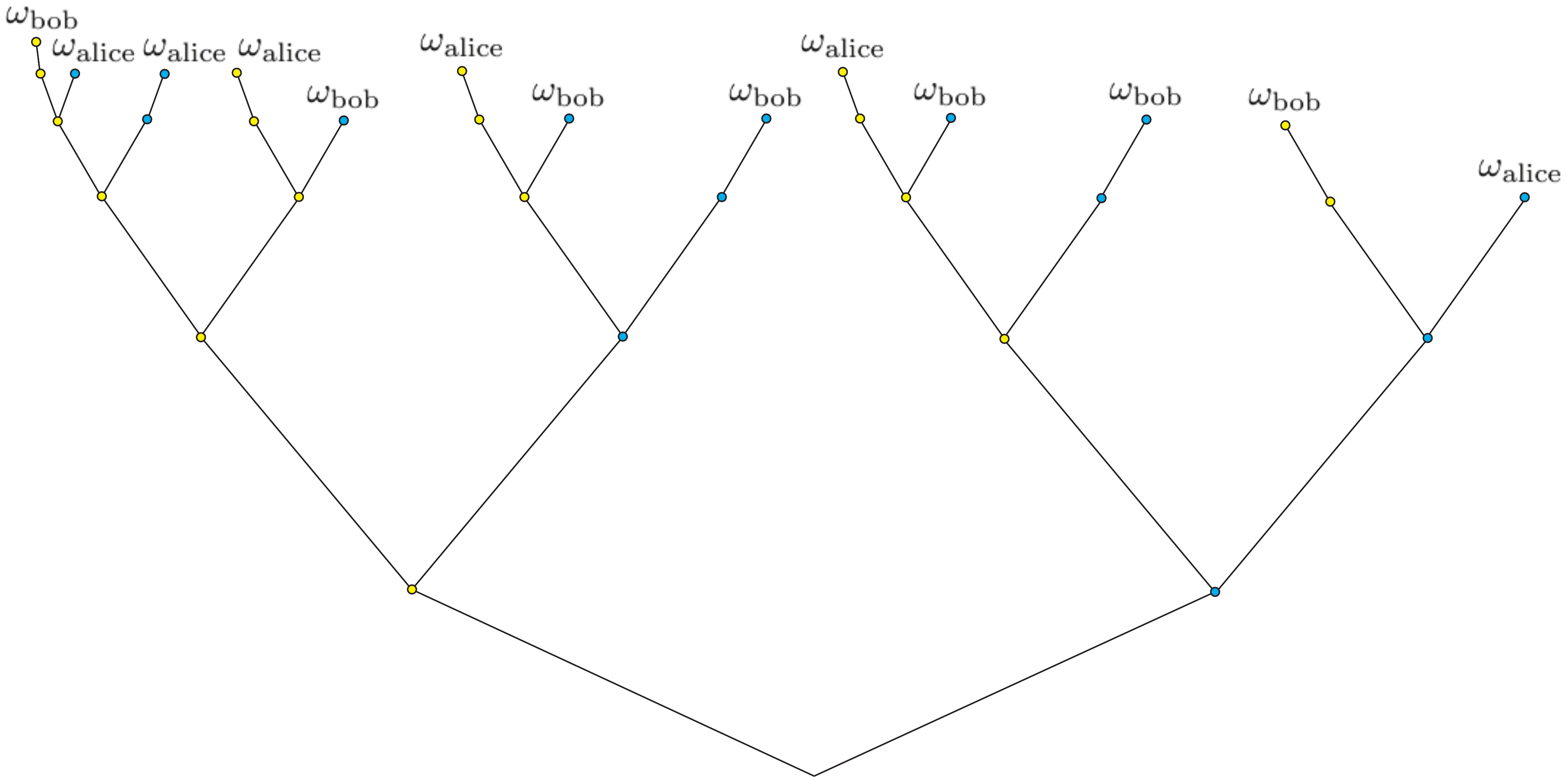}
\caption{All possible histories in the game of Nim where Alice plays first. Moving to a blue node denotes taking two tokens from the pile and moving to a yellow node denotes taking one token from the pile.}
\end{center}
\end{figure}

\section{Combinatorial Analysis of Finite Games and S$5$ Modal Frames}
It is interesting to observe the asymptotic combinatorial properties of any structure, and in particular, we can comment on games and other modal frames satisfying particular axioms by analyzing their asymptotic properties.

\begin{Theorem}\label{gamenumber}
The number of distinct games on a set $T$ with cardinality $n$ is asymptotically
$$\sum_{i=1}^{n} \sum_{j=1}^{n-i} {n \choose i} {n-i \choose j} (2^i - 1)^{j}(2^{j}-1)^{n-i-j}$$
\end{Theorem}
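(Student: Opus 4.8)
The plan is to reduce the statement to a known enumeration problem for labelled partial orders and then to count the typical such order by slicing it into instants. First I would observe that, by the definitions of Section 1, a game on $T$ is a branching model $\langle T,<,V\rangle$, that two games should be regarded as ``the same game on $T$'' exactly when they share the underlying branching frame $\langle T,<\rangle$ (the valuation $V$ playing no role in individuating a game on a fixed $T$), and that the branching condition on $\{s:s<t\}$ is automatically satisfied for any strict order $<$; hence the number of distinct games on an $n$-element $T$ is exactly the number $P(n)$ of strict partial orders on an $n$-element set. The theorem then becomes the purely combinatorial claim that $P(n)$ is asymptotic to the displayed double sum, and the first task is to make this reduction airtight, in particular fixing the reading of ``distinct game'' so that it genuinely forgets $V$.

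Next I would obtain the double sum as the exact count of a restricted class of posets. Given a finite poset, assign to each element $t$ its height $|h_p(t)|$ along a longest descending chain; this partitions the ground set into the instants $I_0,I_1,I_2,\dots$, each an antichain. Consider the posets with exactly three nonempty instants, of sizes $i$, $j$, and $n-i-j$: there are $\binom{n}{i}\binom{n-i}{j}$ ways to decide which elements lie in $I_0$, $I_1$, $I_2$, and once that is fixed the order is recovered from its covering relation, which consists of a nonempty subset of $I_0$ below each element of $I_1$ (giving $(2^i-1)^j$ choices) together with a nonempty subset of $I_1$ below each element of $I_2$ (giving $(2^j-1)^{n-i-j}$ choices), all further $I_0$--$I_2$ comparabilities being forced by transitivity. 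Summing over $1\le i$, $1\le j$, $i+j\le n$ reproduces the displayed expression, and one checks that the assignment ``stratified data $\mapsto$ poset'' is a bijection onto the class of posets of height at most two having no direct $I_0$--$I_2$ cover; in particular the double sum is at most $P(n)$.

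Finally I would upgrade this to an asymptotic equality by invoking the Kleitman--Rothschild theorem: almost every labelled poset on $n$ elements has exactly three nonempty levels, of sizes asymptotic to $n/4$, $n/2$, $n/4$, and --- from the finer structure established in that proof --- almost every minimal--maximal comparability is routed through a middle element, so that the posets with a direct $I_0$--$I_2$ cover, or with a number of levels other than three, together number $o(P(n))$. Consequently the stratified posets counted by the double sum make up a $1-o(1)$ fraction of all $P(n)$ posets, which is precisely the asymptotic claim; combining the three steps yields the theorem.

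The main obstacle is the last step. The exact count in the middle step is routine bookkeeping, but it only reaches posets of height at most two, so the word ``asymptotically'' is doing real work and cannot be dropped; supplying it genuinely requires the Kleitman--Rothschild structure theorem, together with a little extra care to extract the ``comparabilities are routed through the middle'' refinement, to check that replacing $2^i,2^j$ by $2^i-1,2^j-1$ and including the degenerate two-level profiles ($n-i-j=0$) perturb only lower-order terms, and to confirm that the binomial prefactors --- which carry the subexponential corrections --- are consistent with whatever precision of ``asymptotically'' (ratio tending to $1$, or merely agreement on the logarithmic scale) one chooses to assert.
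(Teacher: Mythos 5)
Your proposal is correct and follows essentially the same route as the paper: identify distinct games on $T$ with strict (equivalently, non-strict) partial orders on $T$, then invoke the Kleitman--Rothschild asymptotic enumeration of partial orders. The only difference is that you additionally unpack what the double sum counts (three-level posets with comparabilities routed through the middle level), which the paper leaves implicit by citing the Kleitman--Rothschild formula as a black box.
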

\begin{proof}
Given a set of time points $T$, a strict partial ordering $<$ on $T$ defines a branching frame $\mathcal{F} = \langle T, < \rangle$. The relation $<$ determines the rules of the game, that is, $t_{i} < t_{j}$ if it possible for a player to make a move which alters the configuration of the game from $t_{i}$ to $t_{j}$. So, we have that the number of distinct games on a set $T$ with cardinality $n$ is equal to the number of strict partial orderings of $T$. Let $\leq$ be a non-strict partial order then define the reflexive reduction of $\leq$ by $a < b$ if and only if $a \leq b$ and $a \neq b$. Conversely, let $<$ be a strict partial order and define the reflexive closure of $<$ by $a \leq b$ if and only if $a < b$ or $a=b$. Therefore, there exists a bijection between the set of all strict partial orders and non-strict partial orders and so the number of strict partial orders on a set of $n$ elements is equal to the number of non-strict partial orders on a set of $n$ elements.

By \cite{asymp}, we have that the number of partial orders on a set of $n$ elements is equal to
\begin{align*}
&\left(1 + O\left(\frac{1}{n}\right)\right)\left(\sum_{i=1}^{n} \sum_{j=1}^{n-i} {n \choose i} {n-i \choose j} (2^i - 1)^{j}(2^{j}-1)^{n-i-j}\right)  \\
&= \left(\sum_{i=1}^{n} \sum_{j=1}^{n-i} {n \choose i} {n-i \choose j} (2^i - 1)^{j}(2^{j}-1)^{n-i-j}\right) + O\left(\frac{1}{n}\right)\left(\sum_{i=1}^{n} \sum_{j=1}^{n-i} {n \choose i} {n-i \choose j} (2^i - 1)^{j}(2^{j}-1)^{n-i-j}\right) \\
&\sim \sum_{i=1}^{n} \sum_{j=1}^{n-i} {n \choose i} {n-i \choose j} (2^i - 1)^{j}(2^{j}-1)^{n-i-j}
\end{align*}
Therefore, since the number of distinct games on $T$ is equal to the number of strict partial orderings of $T$ which is equal to the number of non-strict partial orders of $T$, the number of distinct games on a set $T$ with cardinality $n$ is asymptotically
$$\sum_{i=1}^{n} \sum_{j=1}^{n-i} {n \choose i} {n-i \choose j} (2^i - 1)^{j}(2^{j}-1)^{n-i-j}$$
\end{proof}

A generalization of this theorem can be provided for other modal axioms being satisfied in a Kripke frame $\mathcal{F}$. That is, we specifically restricted our attention to branching frames $\mathcal{F} = \langle T, < \rangle$ where $<$ is a strict partial order. Yet, we can provide asymptotic bounds on other characteristics of Kripke frames based on the number of relations on a set of $n$ elements of a certain type. For example, consider $S5$ frames, that is, frames $\mathcal{F} = \langle W,R \rangle$ where $W$ is the set of all possible world and $R$ is a relation, for which there exists a valuation that satisfies the following three axioms at every $w \in W$:
\begin{align*}
&\textbf{K}: \Box(\varphi \rightarrow \psi) \rightarrow (\Box \varphi \rightarrow \Box \psi) \\
&\textbf{T}: \Box \varphi \rightarrow \varphi \\
&\textbf{5}: \Diamond \varphi \rightarrow \Box \Diamond \varphi
\end{align*}
S$5$ modal frames can alternatively be characterized by $\textbf{K}$ and $\textbf{T}$ along with two additional axioms
\begin{align*}
&\textbf{4}: \Box \varphi \rightarrow \Box \Box \varphi \;\;\;\;\;\;\;\;\;\;\;\;\;\;\;\;\;\;\;\;\;\;\; \\
&\textbf{B}: \varphi \rightarrow \Box \Diamond \varphi
\end{align*}

This equivalent characterization of S$5$ modal frames follows from the following theorem. It is a standard result in modal logic that S$5$ can be alternatively characterized this way and a proof can be found in \cite{frameref}.

\begin{Theorem}\label{54Btheorem}
Assume $\mathcal{F}$ is a normal modal frame, that is, $\mathcal{F} \models \Box(\varphi \rightarrow \psi) \rightarrow (\Box \varphi \rightarrow \Box \psi)$. Then $\mathcal{F} \models \Diamond \varphi \rightarrow \Box \Diamond \varphi$ and $\mathcal{F} \models \Box \varphi \rightarrow \Box \Box \varphi$ if and only if $\mathcal{F} \models \Diamond \varphi \rightarrow \Box \Diamond \varphi$.
\end{Theorem}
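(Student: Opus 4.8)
The plan is to handle this biconditional at the level of first-order frame conditions, using the standard correspondences $\mathcal{F}\models\textbf{5}\iff R$ is Euclidean ($\forall w,v,u,\,(wRv\wedge wRu)\rightarrow vRu$) and $\mathcal{F}\models\textbf{4}\iff R$ is transitive ($\forall w,v,u,\,(wRv\wedge vRu)\rightarrow wRu$). Both $\textbf{4}$ and $\textbf{5}$ are Sahlqvist formulas, so I would either cite this standard correspondence (as the surrounding text already does via \cite{frameref}) or re-derive each direction by the usual minimal-valuation argument. The forward direction of the biconditional is immediate: if $\mathcal{F}$ validates both $\textbf{5}$ and $\textbf{4}$ then it validates $\textbf{5}$, so nothing is required there. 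All of the content sits in the converse, which after discarding the trivial conjunct $\textbf{5}\Rightarrow\textbf{5}$ collapses to the single implication $\mathcal{F}\models\textbf{5}\Rightarrow\mathcal{F}\models\textbf{4}$, i.e. to showing that every Euclidean relation on $W$ is transitive.

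To attempt that implication semantically, I would fix $w,v,u$ with $wRv$ and $vRu$ and try to produce $wRu$ using only Euclideanness. The Euclidean clause applied to $wRv,wRv$ yields $vRv$; applied to $vRu,vRv$ it yields $uRv$; applied to $vRu,vRu$ it yields $uRu$. These exhaust the available consequences, and none of them is $wRu$: to reach $wRu$ through the Euclidean clause I would need a point $x$ with $xRw$ and $xRu$, and nothing in the hypotheses forces such an $x$ into the frame.

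The hard part—indeed, what I expect to be the decisive obstacle—is exactly this gap. The only natural way to close the triangle is to first obtain a self-loop $wRw$, from which Euclideanness delivers symmetry ($wRv,wRw$ gives $vRw$) and then one further Euclidean step ($vRw,vRu$ gives $wRu$) yields transitivity; but a self-loop at $w$ is precisely the reflexivity supplied by $\textbf{T}$, and normality together with $\textbf{5}$ does not supply it. Concretely, the three-world frame with $R=\{(a,b),(b,b),(b,c),(c,b),(c,c)\}$ is Euclidean yet has $aRb$ and $bRc$ without $aRc$, so it validates $\textbf{5}$ but refutes $\textbf{4}$. I would therefore expect the argument to succeed only once the ambient reflexivity of the S$5$ setting is invoked, and I would flag that, as stated over arbitrary normal frames, the converse implication cannot be carried through without strengthening the hypotheses to include $\textbf{T}$—equivalently, recasting the intended statement as $\{\textbf{4},\textbf{B}\}\Leftrightarrow\{\textbf{5}\}$ over reflexive frames, which is the genuine equivalence underlying the identification $\mathbf{S5}=\mathbf{KT5}=\mathbf{KT4B}$.
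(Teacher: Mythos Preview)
Your analysis is correct, and what you have uncovered is a defect in the \emph{statement} rather than a gap in your own argument. The paper does not actually prove this theorem: the surrounding text simply remarks that the alternative axiomatization of $\mathbf{S5}$ is standard and refers to Hughes--Cresswell \cite{frameref}. So there is no in-paper proof to compare your approach against.

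On the substance, you are right on every point. As literally written the theorem asserts $(\textbf{5}\wedge\textbf{4})\Leftrightarrow\textbf{5}$ over arbitrary normal frames, whose only nontrivial content is $\textbf{5}\Rightarrow\textbf{4}$, i.e.\ ``Euclidean implies transitive.'' Your three-world frame $W=\{a,b,c\}$ with $R=\{(a,b),(b,b),(b,c),(c,b),(c,c)\}$ is Euclidean (a routine check) but has $aRb$, $bRc$ and not $aRc$, so it validates $\textbf{5}$ while refuting $\textbf{4}$; the implication therefore fails over normal frames without further hypotheses. Your diagnosis of the repair is also the standard one: with $\textbf{T}$ available, reflexivity gives $wRw$, one Euclidean step yields symmetry ($wRv\Rightarrow vRw$), and a second yields transitivity, recovering the genuine equivalence $\mathbf{KT5}=\mathbf{KT4B}$.

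That the intended statement is really $(\textbf{4}\wedge\textbf{B})\Leftrightarrow\textbf{5}$ in the presence of $\textbf{T}$ is corroborated by how the paper uses the theorem: in the proof of Theorem~\ref{s5equiv} it appeals to Theorem~\ref{54Btheorem} and then immediately asserts $\mathcal{F}\models\varphi\rightarrow\Box\Diamond\varphi$ (axiom $\textbf{B}$) to obtain symmetry via Scott--Lemmon, even though $\textbf{B}$ never appears in Theorem~\ref{54Btheorem} as printed. Your suggested recasting is exactly what the cited reference proves and what the rest of the paper in fact relies on.
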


Now, using the Scott-Lemmon result \cite{Lemmon1977}, we can show a correspondence between S$5$ modal frames and equivalence relations. This correspondence will be essential in our combinatorial analysis of S$5$ modal frames.

\begin{Lemma}\emph{(Scott-Lemmon)} \\
In any frame $\mathcal{F} = \langle W, R \rangle$ with $w_{1},...,w_{n-1} \in W$, 
$F \models \Diamond^{h} \Box^{i} p \rightarrow \Box^{j} \Diamond^{k} p$ implies that for arbitrary $u,v,w \in T$, $w R^{h} v \wedge w R^{j} u \rightarrow \exists x(v R^{i} x \wedge u R^{k} x)$, where the relation is defined by $$a R^{n} b = \bigexists_{i=1}^{n-1} x_{i}\left(a R x_{1} \wedge \left(\bigwedge_{i=1}^{n-2} x_{i} R x_{i+1} \right) \wedge x_{n-1} R b\right)$$
if $n \geq 2$, by $a R b$ if $n=1$, and by $a = b$ if $n = 0$.
\end{Lemma}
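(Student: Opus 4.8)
The plan is to argue the stated implication directly, exploiting the freedom to choose the valuation on $\mathcal{F}$ (and reading $W$ for the ``$T$'' in the statement throughout). Assuming $\mathcal{F} \models \Diamond^{h}\Box^{i}p \to \Box^{j}\Diamond^{k}p$, I would, given worlds $w,u,v$ witnessing the antecedent of the desired first-order condition, build one specific valuation that forces the world $x$ demanded by the consequent to exist.

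First I would dispatch a routine preliminary: by induction on $n$, in any model $\mathcal{M}=\langle W,R,V\rangle$ and for any formula $\varphi$ one has $\mathcal{M}_{w}\models\Box^{n}\varphi$ iff $z \models \varphi$ for every $z$ with $wR^{n}z$, and dually $\mathcal{M}_{w}\models\Diamond^{n}\varphi$ iff $z\models\varphi$ for some $z$ with $wR^{n}z$; here $R^{n}$ is the $n$-fold composition defined in the statement, and the conventions that $R^{0}$ is the identity relation and $\Box^{0}\varphi=\Diamond^{0}\varphi=\varphi$ are in force. The base case $n=0$ is immediate, and the inductive step merely unfolds one box or diamond and uses $wR^{n+1}z \Leftrightarrow \exists y\,(wRy \wedge yR^{n}z)$.

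The core of the argument is then short. Fix $u,v,w\in W$ with $wR^{h}v$ and $wR^{j}u$; I must produce $x$ with $vR^{i}x$ and $uR^{k}x$. I would define a valuation $V$ on $\mathcal{F}$ by $V(p)=\{\,y\in W : vR^{i}y\,\}$, with $V$ arbitrary on all other propositional variables. By construction $\mathcal{M}_{v}\models\Box^{i}p$, and since $wR^{h}v$ this gives $\mathcal{M}_{w}\models\Diamond^{h}\Box^{i}p$; the assumed validity of the Geach schema then yields $\mathcal{M}_{w}\models\Box^{j}\Diamond^{k}p$, and feeding in $wR^{j}u$ gives $\mathcal{M}_{u}\models\Diamond^{k}p$. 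Hence there is an $x$ with $uR^{k}x$ and $\mathcal{M}_{x}\models p$, i.e.\ $x\in V(p)$, which by definition of $V$ means exactly $vR^{i}x$. So $x$ is the required witness, and since $u,v,w$ were arbitrary the first-order condition holds on $\mathcal{F}$.

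I expect the only real content to be the choice of valuation $V(p)=\{\,y : vR^{i}y\,\}$: it is precisely this ``smallest possible'' choice that makes the antecedent $\mathcal{M}_{w}\models\Diamond^{h}\Box^{i}p$ true essentially for free (with witness $v$), while staying tight enough that the consequent $\mathcal{M}_{u}\models\Diamond^{k}p$ can be realized only by a world $R^{i}$-reachable from $v$. Everything else is bookkeeping: one should check that the preliminary induction and the main argument still read correctly when any of $h,i,j,k$ is $0$ — for instance $i=0$ collapses $\Box^{i}p$ to $p$ and $V(p)$ to $\{v\}$ — which presents no difficulty. I would also flag explicitly that this proves only the direction actually needed below, namely validity of the schema $\Rightarrow$ the first-order (confluence) condition; the converse holds too but will not be used in the subsequent counting of S$5$ frames.
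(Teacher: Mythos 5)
Your proof is correct. Note, however, that the paper itself offers no proof of this lemma at all: it is stated as the imported Scott--Lemmon result and attributed to Lemmon's notes \cite{Lemmon1977}, so there is nothing in the paper to compare your argument against step by step. What you have written is the standard frame-correspondence argument for the Geach schema: the ``minimal'' valuation $V(p)=\{y: vR^{i}y\}$ makes $\Diamond^{h}\Box^{i}p$ true at $w$ for free and forces any witness of $\Diamond^{k}p$ at $u$ to be $R^{i}$-reachable from $v$, which is exactly the confluence condition. Your preliminary induction on the semantics of iterated modalities, your handling of the degenerate cases $h,i,j,k=0$, and your remark that only the validity-implies-condition direction is used later (in deriving reflexivity, symmetry and transitivity of S$5$ frames) are all accurate. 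The one cosmetic point worth flagging is that the statement's ``$u,v,w\in T$'' should read ``$u,v,w\in W$'', as you silently corrected.
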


We now give a proof using the Scott-Lemmon result that S$5$ modal frames can be thought of as frames with equivalence relations as the accessibility relation between possible worlds.

\begin{Theorem}\label{s5equiv}
Every S$5$ modal frame is reflexive, symmetric, and transitive.
\end{Theorem}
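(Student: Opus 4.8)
The plan is to use the alternative characterization of S$5$ supplied by Theorem~\ref{54Btheorem}: it suffices to show that any frame $\mathcal{F} = \langle W, R\rangle$ validating $\textbf{K}$, $\textbf{T}$, $\textbf{4}$, and $\textbf{B}$ has $R$ reflexive, symmetric, and transitive. The point is that each of $\textbf{T}$, $\textbf{B}$, and $\textbf{4}$ can be written in the Lemmon--Scott form $\Diamond^{h}\Box^{i}p \rightarrow \Box^{j}\Diamond^{k}p$, so the Scott--Lemmon Lemma converts each into a first-order ``confluence'' condition on $R$; reading those conditions off and simplifying with $R^{0} = {=}$, $R^{1} = R$, and $R^{2} = R \circ R$ yields the three properties one at a time.

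For reflexivity, write $\textbf{T}$, $\Box p \rightarrow p$, as $\Diamond^{0}\Box^{1}p \rightarrow \Box^{0}\Diamond^{0}p$, i.e. $(h,i,j,k) = (0,1,0,0)$. The resulting condition $wR^{0}v \wedge wR^{0}u \rightarrow \exists x(vR^{1}x \wedge uR^{0}x)$ forces $v = u = w$ and then $\exists x(wRx \wedge x = w)$, that is, $wRw$. For symmetry, write $\textbf{B}$, $\varphi \rightarrow \Box\Diamond\varphi$, as $\Diamond^{0}\Box^{0}p \rightarrow \Box^{1}\Diamond^{1}p$, i.e. $(h,i,j,k) = (0,0,1,1)$; the condition becomes $w = v \wedge wRu \rightarrow \exists x(v = x \wedge uRx)$, which is $wRu \rightarrow uRw$. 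For transitivity, write $\textbf{4}$, $\Box p \rightarrow \Box\Box p$, as $\Diamond^{0}\Box^{1}p \rightarrow \Box^{2}\Diamond^{0}p$, i.e. $(h,i,j,k) = (0,1,2,0)$; the condition becomes $w = v \wedge wR^{2}u \rightarrow \exists x(vRx \wedge u = x)$, i.e. $wR^{2}u \rightarrow wRu$, and unpacking $wR^{2}u$ as $\exists y(wRy \wedge yRu)$ this is precisely transitivity. Together these show $R$ is an equivalence relation.

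I expect the main obstacle to be the bookkeeping involved in applying the Scott--Lemmon Lemma: selecting the correct tuple $(h,i,j,k)$ for each axiom, verifying that the degenerate relations $R^{0}$ and $R^{1}$ collapse to equality and to $R$ respectively, and using the fact that $u,v,w$ are universally quantified in the lemma so that we may specialise them freely. A second, minor point is that the lemma only gives the implication from the modal axiom to the frame condition, but that is the only direction needed here, since the statement asserts merely that every S$5$ frame possesses these three properties.
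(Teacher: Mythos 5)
Your proposal is correct and takes essentially the same route as the paper: both invoke Theorem~\ref{54Btheorem} to obtain axioms \textbf{4} and \textbf{B}, then apply the Scott--Lemmon Lemma with the tuples $(h,i,j,k) = (0,1,0,0)$, $(0,0,1,1)$, and $(0,1,2,0)$ for \textbf{T}, \textbf{B}, and \textbf{4} to read off reflexivity, symmetry, and transitivity respectively.
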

\begin{proof}
Let $\mathcal{F} = \langle W,R \rangle$ be an arbitrary S$5$ modal frame. That is, assume $\mathcal{F} \models \Box(\varphi \rightarrow \psi) \rightarrow (\Box \varphi \rightarrow \Box \psi), \mathcal{F} \models \Box \varphi \rightarrow \varphi,$ and $\mathcal{F} \models \Diamond \varphi \rightarrow \Box \Diamond \varphi$. Firstly, we claim that $\mathcal{F}$ is reflexive. By the Scott-Lemmon result, $\mathcal{F} \models \Box \varphi \rightarrow \varphi = \Diamond^{0} \Box^{1} \varphi \rightarrow \Box^{0} \Diamond^{0} \varphi$ implies $w R^{0} \wedge w R^{0} u \rightarrow \exists x(v R x \wedge u R^{0} x)$ which is that $(w = v \wedge w =u ) \rightarrow \exists x (v R x \wedge u =x)$. This reduces to $v = u \rightarrow v R u$, hence $\mathcal{F}$ is reflexive since $u,v \in W$ are arbitrary. By Theorem \ref{54Btheorem}, we have $\mathcal{F} \models \Diamond \varphi \rightarrow \Box \Diamond \varphi$ and $\mathcal{F} \models \Box \varphi \rightarrow \Box \Box \varphi$. Secondly, we claim that $\mathcal{F}$ is symmetric. By the Scott-Lemmon result, $\mathcal{F} \models \varphi \rightarrow \Box \Diamond \varphi = \Diamond^{0} \Box^{0} \varphi \rightarrow \Box^{1} \Diamond^{1}$ implies $w R ^{0} v \wedge w R^{1} u \rightarrow \exists x(v R^{0} x \wedge u R^{1} x)$ which is that $w R u \rightarrow u R w$, hence $\mathcal{F}$ is symmetric since $u,w \in W$ are arbitrary. Lastly, we claim that $\mathcal{F}$ is transitive. By the Scott-Lemmon result we have that $\mathcal{F} \models \Box \varphi \rightarrow \Box \Box \varphi = \Diamond^{0} \Box^{1} \varphi \rightarrow \Box^{2} \Diamond^{0} \varphi$ implies $w R^{0} v \wedge w R^{2} u \rightarrow \exists x(v R x \wedge u R^{0} x)$ which is that $w = v \wedge \exists y (w R y \wedge y R u) \rightarrow \exists x (v R x \wedge u = x)$, so $\exists y(w R y \wedge y R u ) \rightarrow w R u$. Hence $\mathcal{F}$ is transitive since $u,w \in W$ are arbitrary. Therefore, every S$5$ modal frame is reflexive, symmetric, and transitive.
\end{proof}

We remark that the number of isomorphism classes of S$5$ modal frames is the number of non-isomorphic S$5$ modal frames, by the definition of isomorphism class. The following lemma is necessary in order to study the combinatorics of non-isomorphic S$5$ modal frames.

\begin{Lemma}\label{isolemma}
Let $\mathcal{F} = \langle W,R \rangle$ and $\mathcal{F}' = \langle W',R' \rangle$ be S$5$ modal frames. Then, $\mathcal{F} \cong \mathcal{F}'$ if and only if there exists a bijection $\varphi: W \rightarrow W'$ such that $u R v \Longleftrightarrow \varphi(u) R' \varphi(v)$, $\forall u,v \in W$.
\end{Lemma}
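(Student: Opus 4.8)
The plan is to reduce the notion of isomorphism of S$5$ frames to the purely relational back-and-forth condition stated in the lemma. Recall that an isomorphism of Kripke frames is a bijective bounded morphism: a bijection $\varphi : W \to W'$ satisfying the \emph{forth} condition $uRv \Rightarrow \varphi(u) R' \varphi(v)$ together with the \emph{back} condition that $\varphi(u) R' w'$ implies the existence of $v \in W$ with $uRv$ and $\varphi(v) = w'$. Since by Theorem \ref{s5equiv} an S$5$ frame is simply a set carrying an equivalence relation, nothing in the argument will invoke the modal axioms directly; it will be a formal manipulation of the forth and back clauses together with bijectivity.

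For the direction ($\Leftarrow$), suppose $\varphi : W \to W'$ is a bijection with $uRv \Longleftrightarrow \varphi(u) R' \varphi(v)$ for all $u, v \in W$. The forth condition is immediate from the left-to-right implication. For the back condition, given $\varphi(u) R' w'$, put $v = \varphi^{-1}(w')$; then $\varphi(u) R' \varphi(v)$, so $uRv$ by the right-to-left implication, and $\varphi(v) = w'$. Hence $\varphi$ is a bijective bounded morphism, and so $\mathcal{F} \cong \mathcal{F}'$; the same computation applied to $\varphi^{-1}$, which inherits the symmetric biconditional because $\varphi$ is a bijection, shows $\varphi^{-1}$ is a bounded morphism as well, covering any definition of isomorphism that demands this.

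For the direction ($\Rightarrow$), suppose $\mathcal{F} \cong \mathcal{F}'$ via a bijective bounded morphism $\varphi$. The forth condition supplies $uRv \Rightarrow \varphi(u) R' \varphi(v)$. Conversely, assume $\varphi(u) R' \varphi(v)$; the back condition yields $w \in W$ with $uRw$ and $\varphi(w) = \varphi(v)$, and injectivity of $\varphi$ forces $w = v$, whence $uRv$. Therefore $uRv \Longleftrightarrow \varphi(u) R' \varphi(v)$ for all $u, v \in W$, as required.

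I do not anticipate a genuine obstacle here: the only point requiring care is pinning down the ambient definition of isomorphism, and once it is taken to be a bijective bounded morphism --- equivalently, an isomorphism of relational structures over a signature with a single binary relation symbol, in which case the lemma is essentially a restatement of that definition --- both implications are routine. I expect to remark explicitly that reflexivity, symmetry, and transitivity are never used, so the statement in fact holds for arbitrary Kripke frames; the S$5$ hypothesis is relevant only downstream, where Theorem \ref{s5equiv} will let us identify an isomorphism class of S$5$ frames with a partition of $|W|$.
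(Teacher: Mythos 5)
Your proof is correct and, like the paper's (one-sentence) argument, amounts to unpacking the definition of isomorphism for structures with a single binary relation; you simply do it more carefully, via the equivalence between bijective bounded morphisms and relation-preserving bijections. Your observation that the S$5$ hypothesis is never used is also accurate.
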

\begin{proof}
An isomorphism $\mathcal{F} \cong \mathcal{F}'$ induces, and is induced by, a bijection $\varphi: W \rightarrow W'$ for which we can construct a bijection $\varphi \times \varphi: W \times W \rightarrow W' \times W'$ defined by $(\varphi \times \varphi)(u,v) = (\varphi(u),\varphi(v))$.
\end{proof}

If we have that there exists a bijection $\phi: W \rightarrow W'$ such that $u R v \Longleftrightarrow \phi(u) R' \phi(v)$, $\forall u,v \in W$, then we say that the two relations are isomorphic and write $R \cong R'$. We now have the sufficient background to provide our combinatorial analysis of S$5$ modal frames as a generalization of Theorem \ref{gamenumber}.

\begin{Theorem}\label{framenumber}
The number of non-isomorphic $S5$ modal frames $\mathcal{F} = \langle W, R \rangle$ with $|W|=n$, denoted by $|\mathcal{F}(n)|_{S5}$, is
$$\frac{1}{\pi\sqrt{2}} \sum_{k=1}^{\infty} \sum_{h=1}^{k} \delta_{\gcd(h,k),1} \text{exp}\left(\pi i \sum_{j=1}^{k-1} \frac{j}{k}\left(\frac{hj}{k} - \left\lfloor \frac{hj}{k} \right\rfloor - \frac{1}{2}\right) - \frac{2\pi i h n}{k} \right) \sqrt{k} \frac{d}{dn}\left[ \frac{\sinh\left(\frac{\pi}{k} \sqrt{\frac{2}{3}(n - \frac{1}{24})}\right)}{\sqrt{n - \frac{1}{24}}} \right]$$
\end{Theorem}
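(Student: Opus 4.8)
The plan is to show that $|\mathcal{F}(n)|_{S5}$ equals the integer partition function $p(n)$, and then to recognize the displayed expression as Rademacher's convergent series for $p(n)$. First I would establish that the S$5$ modal frames on a set $W$ are exactly the frames $\langle W, R \rangle$ in which $R$ is an equivalence relation on $W$. One inclusion is precisely Theorem \ref{s5equiv}: every S$5$ frame is reflexive, symmetric, and transitive. For the converse I would check the routine frame correspondences: $\textbf{K}$ is valid on every frame, $\textbf{T}$ is valid exactly on reflexive frames, and $\textbf{5}$ is valid exactly on Euclidean frames (this last correspondence is the instance $\Diamond^{1}\Box^{0}p \rightarrow \Box^{1}\Diamond^{1}p$ of the Scott--Lemmon Lemma); since a reflexive symmetric transitive relation is in particular Euclidean, any equivalence relation yields a frame validating $\textbf{K}$, $\textbf{T}$, and $\textbf{5}$. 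Hence the S$5$ frames on $W$ are precisely the equivalence-relation frames on $W$.

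Next I would translate isomorphism into combinatorics. By Lemma \ref{isolemma}, $\mathcal{F} \cong \mathcal{F}'$ iff there is a bijection $\varphi \colon W \rightarrow W'$ with $u R v \Leftrightarrow \varphi(u) R' \varphi(v)$; when $R$ and $R'$ are equivalence relations this says exactly that $\varphi$ carries the partition of $W$ into $R$-classes bijectively onto the partition of $W'$ into $R'$-classes. Two partitions of finite sets of equal size admit such a $\varphi$ if and only if they have the same multiset of block sizes. Therefore sending an S$5$ frame with $|W| = n$ to the multiset of cardinalities of its equivalence classes induces a bijection from the isomorphism classes of S$5$ modal frames on $n$ worlds onto the multisets of positive integers summing to $n$, i.e.\ onto the partitions of the integer $n$; surjectivity is witnessed, for a partition $n = \lambda_{1} + \cdots + \lambda_{r}$, by the equivalence relation on $\{1,\dots,n\}$ whose classes are consecutive blocks of sizes $\lambda_{1},\dots,\lambda_{r}$, which is an S$5$ frame by the first paragraph. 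Consequently $|\mathcal{F}(n)|_{S5} = p(n)$.

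Finally I would invoke Rademacher's exact formula for the partition function, namely $p(n) = \frac{1}{\pi\sqrt{2}} \sum_{k=1}^{\infty} A_{k}(n) \sqrt{k}\, \frac{d}{dn}\big[ \sinh(\tfrac{\pi}{k}\sqrt{\tfrac{2}{3}(n - \tfrac{1}{24})})/\sqrt{n - \tfrac{1}{24}} \big]$, where $A_{k}(n) = \sum_{0 \le h < k,\ \gcd(h,k)=1} \exp(\pi i\, s(h,k) - 2\pi i h n/k)$ and $s(h,k) = \sum_{j=1}^{k-1} \frac{j}{k}\big(\frac{hj}{k} - \lfloor \frac{hj}{k} \rfloor - \frac{1}{2}\big)$ is the Dedekind sum. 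Since the Kronecker delta $\delta_{\gcd(h,k),1}$ in the statement restricts the $h$-summation to residues coprime to $k$, substituting this form of $A_{k}(n)$ reproduces the displayed expression verbatim, and combined with $|\mathcal{F}(n)|_{S5} = p(n)$ this completes the proof.

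The step I expect to be the main obstacle is not the analytic number theory — Rademacher's series is a classical, quotable result — but verifying that the correspondence with integer partitions is an honest bijection on isomorphism classes: one must check that the S$5$ axioms are \emph{sufficient} for $R$ to be an equivalence relation (Theorem \ref{s5equiv} supplies only necessity), and that distinct block-size multisets give non-isomorphic frames while every multiset is realized, so that the enumeration is exactly $p(n)$ with neither overcounting nor omission.
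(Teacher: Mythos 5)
Your proposal is correct and follows essentially the same route as the paper: identify S$5$ frames with equivalence relations, count non-isomorphic equivalence relations by integer partitions of $n$, and quote Rademacher's convergent series for $p(n)$. You are in fact somewhat more careful than the paper's own proof, which invokes Theorem \ref{s5equiv} and a citation for the partition count but does not verify the converse direction (that every equivalence-relation frame validates $\textbf{K}$, $\textbf{T}$, and $\textbf{5}$) nor that the block-size correspondence is a genuine bijection on isomorphism classes; both of these checks, which you supply, are needed for the count to be exactly $p(n)$ rather than merely at most $p(n)$.
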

\begin{proof}
By Theorem \ref{s5equiv} and Lemma \ref{isolemma} we have that there exists a bijection between the set of all S$5$ modal frames $\mathcal{F} = \langle W,R \rangle$ with $|W|=n$ and the set of all equivalence classes on $n$ elements. The number of non-isomorphic equivalence relations on a set of $n$ elements is the number of integer partitions of $n$ from page 57 in \cite{modelsandgames}, so $|\mathcal{F}(n)|_{S5}$ is equal to $p(n)$, the number of integer partitions of $n$. Therefore, the theorem holds due to the expression for $p(n)$ given by Rademacher \cite{1937}.

\end{proof}

Clearly the equation in Theorem \ref{framenumber} is unwieldy for any computation for a particular value of $n$, so we mention the following corollary.

\begin{Corollary}\label{asympframe}
The number of non-isomorphic S$5$ modal frames $\mathcal{F} = \langle W,R \rangle$ with $|W| = n$ is asymptotically the Hardy-Ramanujan number
$$\frac{1}{4\sqrt{3}n}e^{\pi \sqrt{2n/3}}$$
\end{Corollary}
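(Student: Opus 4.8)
The plan is to lean on Theorem \ref{framenumber}, which already identifies $|\mathcal{F}(n)|_{S5}$ with the partition function $p(n)$; the corollary is then nothing more than the Hardy--Ramanujan asymptotic $p(n)\sim\frac{1}{4\sqrt{3}\,n}e^{\pi\sqrt{2n/3}}$. Rather than re-running the circle method from scratch, I would read the leading term straight out of the Rademacher series displayed in Theorem \ref{framenumber}, since that exact identity is in hand: write the series as $p(n)=\frac{1}{\pi\sqrt 2}\sum_{k\ge 1}A_k(n)\sqrt k\,\frac{d}{dn}[\cdots]$, where $A_k(n)$ denotes the inner sum over $h$, and split off the $k=1$ summand.

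For $k=1$ the only admissible $h$ is $h=1$, with $\gcd(1,1)=1$; the inner sum $\sum_{j=1}^{k-1}$ is empty and $e^{-2\pi i h n/k}=e^{-2\pi i n}=1$ since $n\in\mathbb{Z}$, so $A_1(n)=1$ and the $k=1$ term is exactly
$$\frac{1}{\pi\sqrt 2}\,\frac{d}{dn}\!\left[\frac{\sinh\!\left(\pi\sqrt{\tfrac{2}{3}\bigl(n-\tfrac{1}{24}\bigr)}\right)}{\sqrt{n-\tfrac{1}{24}}}\right].$$
Setting $N=n-\tfrac1{24}$ and $C=\pi\sqrt{2/3}$, a one-line differentiation gives $\frac{d}{dN}\bigl[\sinh(C\sqrt N)/\sqrt N\bigr]=\frac{C}{2N}\cosh(C\sqrt N)-\frac{1}{2N^{3/2}}\sinh(C\sqrt N)$, and since $\cosh(C\sqrt N),\sinh(C\sqrt N)\sim\tfrac12 e^{C\sqrt N}$ this bracket is asymptotic to $\frac{C}{4N}e^{C\sqrt N}$. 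Substituting $C=\pi\sqrt{2/3}$ and $N\sim n$, and simplifying $\frac{1}{\pi\sqrt 2}\cdot\frac{\pi\sqrt{2/3}}{4}=\frac{1}{4\sqrt 3}$, shows the $k=1$ term alone is asymptotic to $\frac{1}{4\sqrt 3\,n}e^{\pi\sqrt{2n/3}}$ — the Hardy--Ramanujan number.

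It then remains to show the tail $k\ge 2$ is of strictly smaller order. For $k\ge 2$ the hyperbolic factor has argument $\frac{\pi}{k}\sqrt{\tfrac23(n-\tfrac1{24})}$, so the same differentiation makes each summand $O\!\bigl(n^{-1}e^{\frac{\pi}{k}\sqrt{2n/3}}\bigr)$, while $|A_k(n)|\le\varphi(k)\le k$; already the $k=2$ term is $O\!\bigl(n^{-1}e^{\frac{\pi}{2}\sqrt{2n/3}}\bigr)$, exponentially smaller than the $k=1$ term, and summing over all $k\ge 2$ (using the estimates from Rademacher's convergence proof to keep the polynomial-in-$k$ factors under control) stays within that order. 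Combining, $p(n)=\frac{1}{4\sqrt 3\,n}e^{\pi\sqrt{2n/3}}\bigl(1+o(1)\bigr)$, and Theorem \ref{framenumber} turns this into the statement about S$5$ frames.

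The main obstacle is making the tail estimate genuinely uniform in $k$ without simply quoting Hardy--Ramanujan: the crude bound $e^{(\pi/k)\sqrt{2n/3}}$ loses too much when $k$ grows with $n$, since the polynomial factors $k^{3/2}$ would then threaten summability, so one really needs the finer $\sinh(x)/\sqrt N$ shape (e.g. $\sinh x\le x\cosh x$ in the range where $x$ is small) to see that the large-$k$ contributions are negligible — this is precisely the content Rademacher's convergence argument supplies, and it is the one non-routine ingredient. The economical alternative, equally acceptable here, is simply to cite the classical Hardy--Ramanujan asymptotic as the precursor of the exact Rademacher identity already invoked in Theorem \ref{framenumber}, after which the corollary is immediate.
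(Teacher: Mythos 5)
Your proposal is correct, but it takes a different route from the paper. The paper's proof of this corollary is a two-line citation: it quotes Erd\H{o}s's asymptotic formula $p(n) \sim \frac{1}{4\sqrt{3}n}e^{\pi\sqrt{2n/3}}$ and combines it with Theorem \ref{framenumber}'s identification $|\mathcal{F}(n)|_{S5} = p(n)$ --- which is precisely the ``economical alternative'' you mention at the end. Your primary argument instead extracts the asymptotic directly from the Rademacher series already displayed in Theorem \ref{framenumber}: isolating the $k=1$ term, computing $A_1(n)=1$, differentiating $\sinh(C\sqrt{N})/\sqrt{N}$, and simplifying $\frac{1}{\pi\sqrt{2}}\cdot\frac{\pi\sqrt{2/3}}{4}=\frac{1}{4\sqrt{3}}$ is all carried out correctly, and you are right that the only non-routine point is making the $k\ge 2$ tail estimate uniform in $k$, which you correctly attribute to Rademacher's convergence analysis rather than claiming it as trivial. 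What your route buys is self-containedness relative to the exact formula the paper has already invoked (no second citation to the asymptotic literature is needed); what the paper's route buys is brevity and independence from the convergence details of the Rademacher series. Either is acceptable; just be aware that if you take your route in full rigor, the tail bound genuinely requires the finer estimates from Rademacher's paper and cannot be dispatched with the crude bound $e^{(\pi/k)\sqrt{2n/3}}$ alone when $k$ grows with $n$, as you yourself observe.
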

\begin{proof}
Due to Erd\"{o}s \cite{erdos} we have that $$p(n) \sim \frac{1}{4\sqrt{3}n}e^{\pi \sqrt{2n/3}}$$
and therefore by Theorem \ref{framenumber} we have
$$|\mathcal{F}(n)|_{S5} \sim \frac{1}{4\sqrt{3}n}e^{\pi \sqrt{2n/3}}$$
\end{proof}

We now have the results necessary to prove that a modal frame is almost surely not an S$5$ modal frame.

\begin{Theorem}\label{zeroprob}
An arbitrary modal frame $\mathcal{F}$ is an S$5$ modal frame with probability zero.
\end{Theorem}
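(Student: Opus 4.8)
The plan is first to pin down what ``probability'' means here, since there is no uniform probability measure on the proper class of all modal frames. The natural reading is a limiting frequency: for each $n$ endow the finite set of modal frames on a fixed universe of size $n$ with the uniform distribution, let $q_n$ be the probability that such a frame is an S$5$ modal frame, and declare the probability in the statement to be $\lim_{n\to\infty} q_n$. (Equivalently one may work with isomorphism classes throughout, or fix a single sample space by making each of the $n^2$ potential edges present independently with probability $1/2$; all three routes give $0$.) I would state this convention explicitly at the start of the proof.

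Next I would count both sides. A modal frame $\mathcal{F} = \langle W, R\rangle$ with $|W|=n$ is just a choice of binary relation $R \subseteq W \times W$, so there are exactly $2^{n^2}$ labelled frames on $n$ worlds; passing to isomorphism classes and using that each class has at most $n!$ labelled representatives, the number of non-isomorphic frames on $n$ worlds is at least $2^{n^2}/n!$. For the numerator, Theorem~\ref{s5equiv} shows every S$5$ modal frame is an equivalence frame, and conversely an equivalence relation validates $\textbf{K}$, $\textbf{T}$, and $\textbf{5}$; hence the labelled S$5$ frames on $n$ worlds are precisely the equivalence relations on an $n$-set, of which there are $B_n$ (the $n$th Bell number), while the non-isomorphic ones number $p(n)$ by Theorem~\ref{framenumber}.

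Then I would compare growth rates. In the labelled setting $q_n = B_n/2^{n^2}$, and the crude bound $B_n \le n^n = 2^{n\log_2 n}$ together with $n\log_2 n = o(n^2)$ gives $q_n \to 0$; in the isomorphism-class setting the ratio is at most $p(n)\,n!/2^{n^2}$, and since $\log\!\big(p(n)\,n!\big) = O(\sqrt n) + O(n\log n) = o(n^2)$ by Corollary~\ref{asympframe} and Stirling, again the ratio tends to $0$. As $q_n \to 0$, the probability that an arbitrary modal frame is an S$5$ modal frame is $0$, completing the proof.

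I expect the only real obstacle to be the modelling step: there is no canonical uniform measure on all frames, so the theorem has to be read as a limit of finite uniform densities (or via the independent-edge measure), and the proof should justify that this is the intended and natural interpretation. Once a precise reading is fixed, the remainder is a one-line comparison of growth rates powered entirely by the counting results already established in Theorems~\ref{s5equiv} and \ref{framenumber} and Corollary~\ref{asympframe}.
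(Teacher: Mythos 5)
Your proposal is correct and follows essentially the same route as the paper: the paper also interprets the probability as the limit of the ratio of the count of (non-isomorphic) S$5$ frames, namely $p(n)$, to the count of all non-isomorphic frames on $n$ worlds, bounded below by $2^{n^2}/n!$, and concludes from the growth-rate comparison that the limit is $0$. Your version is in fact somewhat more careful than the paper's, both in making the probability model explicit and in handling the denominator (the paper asserts $2^{n^2}/n! \sim 2^{n^2-\varepsilon}$ for a fixed $\varepsilon>0$, which is not literally correct, whereas your estimate $\log\bigl(p(n)\,n!\bigr) = o(n^2)$ is the clean way to finish).
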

\begin{proof}
Let $\mathcal{F} = \langle W,R \rangle$ be an arbitrary modal frame with $|W|=n$. By \cite{integerseq}, we have that the number of non-isomorphic relations on a set of $n$ elements is
$$a(n) = \sum_{1s_{1} + 2s_{2} + \cdot\cdot\cdot =n} \left(\frac{2^{\left(\displaystyle\sum_{i=1} \displaystyle\sum_{j=1} \gcd(i,j)s_{i}s_{j}\right)}}{\displaystyle\prod_{k=1} k^{s_{k}}s_{k}!}\right)$$
Since the number of relations on $W$ is $2^{n^2}$ and there are $n!$ bijections from $W$ to $W$, the number of non-isomorphic relations on $W$ is asymptotically $$\frac{2^{n^2}}{n!} \sim 2^{n^2 - \varepsilon}$$ for some $\epsilon >0$. So, since we have that $2^{n^2 - \epsilon} < a(n)$, Corollary \ref{asympframe} implies that
$$\lim_{n \rightarrow \infty} \frac{p(n)}{a(n)} \leq \lim_{n \rightarrow \infty} \frac{p(n)}{\frac{2^{n^2}}{n!}} = \lim_{n \rightarrow \infty} \frac{\frac{1}{4\sqrt{3}n}e^{\pi \sqrt{2n/3}}}{2^{n^2 - \varepsilon}} = 0$$
Hence,
$$\lim_{n \rightarrow \infty} \frac{p(n)}{a(n)} = 0$$
Thus, since Theorem \ref{framenumber} says that $p(n)$ is the number of non-isomorphic S$5$ modal frames with $n$ possible worlds, and $a(n)$ is the number of non-isomorphic relations on a set of $n$ elements, we have that the probability that $\mathcal{F}$ is an S$5$ modal frame is zero.
\end{proof}

We remark that despite the independent interest of the statement and proof of Theorem \ref{zeroprob} as an application of analytic number theory to modal logic, potential philosophical applications of Theorem \ref{zeroprob} exist. One possible example could be a critique of Alvin Plantinga's version of the ontological argument \cite{plantinga} by noticing that the penultimate conclusion, ``it is necessarily true that an omniscient, omnipotent and perfectly good being exists.'', assumes S$5$ modal logic.

\end{document}